\newsavebox{\tbox}
\title{Generating alternating and symmetric groups with two elements of fixed order}
\author{Daniele Garzoni}
\date{}
\newcommand{\Addresses}{{% additional braces for segregating \footnotesize
  \bigskip
  \footnotesize

  Daniele Garzoni, \textsc{Università degli Studi di Padova, Dipartimento di Matematica "Tullio Levi-Civita", Via Trieste 63, 35121 Padova, Italy}\par\nopagebreak
  \textit{E-mail address}: \texttt{daniele.garzoni@phd.unipd.it}

}}
\newtheorem {thm}{Theorem}[section]
\numberwithin{thm}{section}
\theoremstyle{remark}
\newtheorem*{notat}{Notation}
\theoremstyle{remark}
\newtheorem{cla}{Claim}
\newenvironment{sistemino}
    {\left\{\begin{matrix}}
    {\end{matrix}\right.}
\theoremstyle{remark}
\newtheorem*{prof}{Proof of Theorem 2.3 in case (A)}
\newtheorem*{profo}{Proof of Theorem 2.3 in case (B)}
\theoremstyle{definition}
\newcommand{\gen}[1]{\ensuremath{\langle #1\rangle}}
\newcommand{\floor}[1]{\ensuremath{\lfloor #1\rfloor}}
\begin{document}

\maketitle

\begin{abstract}

We answer a question raised by Lanier \cite{Lan} about the possibility of generating $A_n$ and $S_n$ with two elements of order $k$, where $n \geqslant k \geqslant 3$. We show that this can always be done apart from some clear exceptions.
\end{abstract}

\section{Introduction}

Results about generating sets of groups with order constraints have a long history. For alternating and symmetric groups, they go back to the beginning of the 20th century with a paper of Miller \cite{M2}. Miller showed that, except for some cases with $n \leqslant 8$, $A_n$ is generated by an element of order $2$ together with an element of order $k=3$. He later \cite M extended the result to the case $k \geqslant 4$, and he discussed the same question for symmetric groups. Results of similar flavour can be found for instance in \cite{Nu}.

Recently Lanier \cite{Lan} addressed the problem of generating mapping class groups of surfaces with a small number of elements of fixed order $k \geqslant 5$. He showed that, if the genus of the surface is sufficiently large, then $3$ elements suffice, except for the case $k=5$, where he showed that $4$ elements suffice. He then considered the same question for alternating and symmetric groups. The author finally combined the results for mapping class groups and alternating groups in order to obtain similar results for some automorphism groups of free groups and for $\text{SL}(n,\mathbb{Z})$. 

Concerning alternating and symmetric groups, Lanier showed that if $n \geqslant k \geqslant 3$, then $3$ elements of order $k$ suffice to generate $S_n$ if $k$ is even and to generate $A_n$ if $k$ is odd. He then conjectured that $2$ elements should in fact be enough. Moreover, he showed that, if $3 \leqslant k \leqslant n-2$ with $k$ even, $4$ elements of order $k$ are sufficient in order to generate $A_n$.

There are some obvious restrictions in this problem. First of all, if $k$ is odd then the elements of order $k$ lie in $A_n$, so that they cannot generate $S_n$. Moreover, if $k \in \{n-1,n\}$ is a power of $2$, then the elements of order $k$ coincide with the $k$-cycles: that, being odd permutations, cannot generate $A_n$. Finally, for $n=6$ the only elements of order $6$ are the $6$-cycles and the elements with cycle-shape $(2,3,1)$, both of which are odd permutations and cannot generate $A_6$.

In this note we show that no other restriction exists. In other words, we confirm the conjecture raised by Lanier and we improve his result for $A_n$ and $k$ even, including in our analysis also the case $k \in \{n-1,n\}$.

In order to make the statement of the result more concise, we introduce some terminology. Let $k$ be a positive integer, and let $G$ be a group. When $G$ is generated by finitely many elements of order $k$, we denote with $d_k(G)$ the smallest cardinality of a generating set of $G$ in which all elements have order $k$.

\begin{thm}
\label{mainthm}
Let $n \geqslant k \geqslant 3$ be natural numbers. The following hold:
\begin{itemize}
\item[(i)] if $k$ is even, then $d_k(S_n)=2$. Moreover, $d_k(A_n)=2$ unless $k \in \{n-1,n\}$ is a power of $2$ or $(n,k)=(6,6)$.
\item[(ii)] if $k$ is odd, then $d_k(A_n)=2$ (unless $n=3$ in which case $d_3(A_3)=1$).
\end{itemize}
\end{thm}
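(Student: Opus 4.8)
The lower bound is immediate: $A_n$ (for $n\geqslant 4$) and $S_n$ (for $n\geqslant 3$) are non-cyclic, so no single element generates them, and thus $d_k\geqslant 2$ whenever elements of order $k$ generate the group at all; the value $d_3(A_3)=1$ and the impossibility in the listed exceptional cases are exactly the parity and cycle-shape obstructions already recorded in the introduction. Indeed, for $A_n$ with $k$ even one checks that the excluded cases — $k\in\{n-1,n\}$ a power of $2$, and $(n,k)=(6,6)$ — are precisely those in which $S_n$ contains \emph{no} even permutation of order $k$ (for $k=2^a$ an even element of order $k$ needs support $\geqslant k+2$, and for $k$ not a power of $2$ the tight constraint occurs only at $k=6$, $n=6$). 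So the whole content is the upper bound: I must exhibit, in every non-exceptional case, two permutations of order exactly $k$ generating the target group.

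My plan is to reduce the upper bound to a primitivity statement together with a parity bookkeeping. Concretely, I would build $x,y\in S_n$ of order exactly $k$ so that $G=\langle x,y\rangle$ is transitive and contains a $3$-cycle; by the classical theorem of Jordan, a primitive group containing a $3$-cycle already contains $A_n$, so it then suffices to (a) upgrade transitivity to primitivity and (b) read off the parity. For (b): to land in $A_n$ I take both $x$ and $y$ even — automatic when $k$ is odd, and for $k$ even arranged by using an even number of even-length cycles (e.g.\ a $2^a$-cycle together with a transposition, or two even cycles) — so that $G\leqslant A_n$ and hence $G=A_n$; to land in $S_n$ (with $k$ even) I take at least one generator to be an odd permutation of order $k$, for instance a single $k$-cycle, so that $G$ contains an odd permutation and $G=S_n$.

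The heart is the construction, which I would split according to the size of $n$ relative to $k$; this is presumably the case (A)/case (B) dichotomy flagged in the theorem environments. In the small regime $k\leqslant n\leqslant 2k-1$ I would take $x$ and $y$ to be two $k$-cycles whose supports cover $\{1,\dots,n\}$ and overlap in $2k-n$ points; when the overlap is a single point (the case $n=2k-1$) the commutator $[x,y]$ is exactly a $3$-cycle, and for larger overlap I would instead extract a $3$-cycle from a suitable short word (a commutator, or a power of one) obtained by meshing the two cyclic orders along a single junction. In the large regime $n\geqslant 2k$ a single $k$-cycle cannot reach all points, so I would take each of $x,y$ to be a product of several cycles whose lengths have least common multiple $k$, tiling most of $\{1,\dots,n\}$, with $y$ a shifted copy of $x$ so that the ``seams'' of the two tilings are misaligned; a short word supported near one seam is then engineered to be a $3$-cycle, while the shift forces transitivity. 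In both regimes the cyclic orders carry enough free parameters to simultaneously fix the order at exactly $k$ and fix the parity demanded in step (b).

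Primitivity is where I expect the real work. Transitivity is built into the construction, but ruling out every nontrivial block system takes care, precisely because the natural tilings are highly symmetric. I would establish primitivity either by promoting the action to $2$-transitivity, or — more robustly — by showing that any block containing two points identified by $x$ is dragged, under $y$ and its conjugates, across the overlap/seam until it swallows all of $\{1,\dots,n\}$. The second delicate point, which is really what forces the case division, is matching the order to exactly $k$ while keeping the correct parity uniformly across all residues $n\bmod k$; and the boundary values $k\in\{n-1,n\}$, where the cycle type of an order-$k$ element is severely constrained, together with the genuinely exceptional $(n,k)=(6,6)$ and power-of-$2$ configurations, must be separated out and the few sporadic small cases verified directly.
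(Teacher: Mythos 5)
Your outline reproduces the paper's overall architecture---two explicit elements of order $k$, transitivity upgraded to primitivity, a classical ``primitive plus extra data implies contains $A_n$'' theorem, parity bookkeeping, and a case split by the size of $k$ relative to $n$---and your treatment of the lower bound and of the exceptional cases is correct. But the two steps that carry essentially all of the weight are left as intentions, and one of them is not achievable with the constructions you sketch. You propose to finish via Jordan's theorem, so you must produce an honest $3$-cycle as a word in your two generators. In the case $n=2k-1$ this works (two $k$-cycles whose supports meet in one point have a $3$-cycle as commutator), but in the regime $n\geqslant 2k$, where each generator is a product of many $k$-cycles and $y$ is a shifted copy of $x$, no short word has support of size $3$: the natural difference $x^{-1}y$ is a single \emph{long} cycle, any power $x^{i}$ with $k\nmid i$ still has full support (all cycles of $x$ have equal length $k$), and the obvious commutators move many points. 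You offer no mechanism for the step ``a short word supported near one seam is then engineered to be a $3$-cycle'', and that is exactly where the proof would have to live. The paper faces this same obstruction and resolves it by \emph{lowering the target}: it never produces a $3$-cycle. For $k\leqslant n/2$ it shows $a^{-1}b$ is one long cycle fixing at least $\lceil 3\log n-1\rceil$ points, whence Marggraff's theorem (Theorem \ref{livin}) gives high transitivity and Wielandt's bound (Theorem \ref{old}(iii)) finishes; for $k>n/2$ it settles for a small-support element $b^{a}b^{-1}$ (at most $8$ moved points) together with $2$-transitivity and Bochert's theorem (Theorem \ref{old}(i)), or, in the boundary case $k\in\{n-1,n\}$, a power of $a$ that is a $q$-cycle plus Manning's theorem (Theorem \ref{old}(ii)).

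The second gap is primitivity. You correctly flag it as ``the real work'', but you only name two possible strategies without executing either, and both depend on the exact constructions, which you have not pinned down. In the paper this is the longest single argument (Claim \ref{clpri}): a detailed block-system analysis using the precise cycle structure of $a$ and $b$---which cycles can have support inside one block, which points $a^{-1}b$ fixes, how the parity-correcting transpositions interact with blocks---plus a separate ad hoc argument in case (B) exploiting the odd-length $q$-cycle. As it stands, your proposal is a plausible plan whose two critical steps, the production of the $3$-cycle and the block-system analysis, are both missing; and the first of these, with the tiling constructions you describe, would fail as stated and would force you back to something like the paper's route through Theorems \ref{livin} and \ref{old}.
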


\begin{comment}
Note that, if $k$ is odd, then an element of order $k$ in $S_n$ is an even permutation. Hence, $S_n$ cannot be generated by elements of order $k$ with $k$ odd. On the other hand, if $k \in \{n-1, n-2\}$ is a power of $2$, then the only elements of order $k$ in $S_n$ are the $k$-cycles: which, being odd permutations, cannot generate $A_n$. Moreover, in $S_6$ the only elements of order $6$ are the $6$-cycles and the elements with cycle type $(2,3,1)$: again, they cannot generate $A_6$. Therefore, we need to prove only the positive part of Theorem \ref{mainthm}.
\end{comment}

Consider now the free group $F_n$ on $n$ letters. There exists a map $\text{Aut}(F_n) \rightarrow \text{GL}(n,\mathbb{Z})$ induced by the projection $F_n \rightarrow F_n/(F_n)' \cong \mathbb{Z}^n$. The preimage of $\text{SL}(n,\mathbb{Z})$, that we denote with $\text{Aut}^{+}(F_n)$, is called the special automorphism group of $F_n$. Starting with $\text{Out}(F_n)=\text{Aut}(F_n)/\text{Inn}(F_n)$, with the same construction we can define $\text{Out}^{+}(F_n)$.

Theorem \ref{mainthm} gives immediately the following improvement of \cite{Lan} Theorem 1.3 (see the proof of \cite{Lan} Theorem 1.3).

\begin{thm} Let $n$ and $k$ be natural numbers, with $k \geqslant 5$ and $n \geqslant 2(k-1)$, and let $G$ be among $\text{Aut}^{+}(F_n)$, $\text{Out}^{+}(F_n)$ and $\text{SL}(n,\mathbb{Z})$. Then, $d_k(G) \leqslant 6$. If in addition $k \geqslant 6$ and $n \geqslant 2k$, then $d_k(G) \leqslant 5$.
\end{thm}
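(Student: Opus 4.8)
The plan is to run Lanier's proof of his Theorem 1.3 without change, feeding in Theorem \ref{mainthm} in place of his weaker bounds on $d_k$ of the alternating and symmetric groups. The mechanism of that proof is a reduction: each of $\text{SL}(n,\mathbb{Z})$, $\text{Aut}^+(F_n)$ and $\text{Out}^+(F_n)$ is generated by a copy of the permutation group $A_n$ sitting inside it, together with a \emph{fixed} number of additional elements of order $k$. Consequently $d_k(G) \leqslant d_k(A_n) + c$, where $c$ is the number of extra order-$k$ generators supplied by Lanier's construction, and any improvement of the bound on $d_k(A_n)$ transfers directly to $G$.

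I would take $\text{SL}(n,\mathbb{Z})$ as the model case and obtain the other two from the surjections $\text{Aut}^+(F_n) \twoheadrightarrow \text{SL}(n,\mathbb{Z})$ and $\text{Aut}^+(F_n) \twoheadrightarrow \text{Out}^+(F_n)$ (a generating set of order-$k$ elements upstairs projecting to one downstairs, with the matrix construction lifting to Nielsen transformations on the free basis). Inside $\text{SL}(n,\mathbb{Z})$ the even permutation matrices form the subgroup $A_n$, and since $A_n$ is $2$-transitive its conjugates of a single elementary transvection $E_{12}$ exhaust all $E_{ij}$; hence $\langle A_n, E_{12} \rangle = \text{SL}(n,\mathbb{Z})$. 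The difficulty, and the only real content beyond Theorem \ref{mainthm}, is that $E_{12}$ has infinite order and so must be replaced by a bounded set of genuine order-$k$ elements whose join with $A_n$ still produces a transvection. This is precisely the step in \cite{Lan} that consumes room: the order-$k$ elements are built in two disjoint blocks of size $k-1$ (respectively $k$), which is exactly what the hypotheses $n \geqslant 2(k-1)$ (respectively $n \geqslant 2k$) provide, and the number of them is $4$ in the first regime and $3$ in the second, the drop reflecting the same inefficiency of order $5$ that appears in Lanier's surface results.

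It then remains to verify that Theorem \ref{mainthm} applies with no exceptions and to assemble the counts. If $k$ is odd, then $n \geqslant 8 > 3$, so Theorem \ref{mainthm}(ii) gives $d_k(A_n) = 2$. If $k$ is even, then $k \geqslant 6$, and $n \geqslant 2(k-1) \geqslant k+4$ forces $k \leqslant n-4$, so that $k \notin \{n-1, n\}$ and $(n,k) \neq (6,6)$; hence Theorem \ref{mainthm}(i) also gives $d_k(A_n) = 2$. Taking the two order-$k$ generators of $A_n$ from Theorem \ref{mainthm} together with Lanier's extra elements yields generating sets of order-$k$ elements of size $2+4 = 6$ in the first regime and $2+3 = 5$ in the second, that is, Lanier's bound with his permutation-group input replaced by $2$.

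The main obstacle I anticipate is compatibility bookkeeping rather than a new idea. One must arrange that the copy of $A_n$ produced by Theorem \ref{mainthm} is the standard permutation copy with which Lanier's extra order-$k$ elements are designed to interact; this costs only a conjugation, which is harmless since it preserves orders. One must also check that passing from $\text{Aut}^+(F_n)$ to $\text{Out}^+(F_n)$ does not collapse the order of any generator below $k$, so that all elements still have order exactly $k$; this is handled as in \cite{Lan}, either by verifying the images directly or by exhibiting the order-$k$ elements in each group separately. Given Theorem \ref{mainthm} and Lanier's construction, no step is genuinely hard: the entire gain is the improved permutation-group bound.
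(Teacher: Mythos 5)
Your proposal is correct and is exactly the paper's approach: the paper proves this theorem simply by citing the proof of Lanier's Theorem 1.3 and substituting the improved bound $d_k(A_n)=2$ from Theorem \ref{mainthm} in place of Lanier's bound of $3$, which is precisely your reduction $d_k(G)\leqslant d_k(A_n)+c$ with the same counts. Your check that the hypotheses $k\geqslant 5$ and $n\geqslant 2(k-1)$ rule out all exceptional cases of Theorem \ref{mainthm} is the one detail the paper leaves implicit, and you have it right.
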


% It is our hope that Theorem \ref{mainthm} may have other consequences in the generation of infinite groups.

\begin{comment}
\begin{notat} The underlined permuted set is $\{1, \ldots, n \}$. We adopt the notation used in \cite{Lan}. For every $1 \leqslant i \leqslant n$, we denote with $h_i(a)$ a step $i$-cycle starting at $a$, namely, the cycle $(a, a+1, \ldots, a+i-1)$, where all the entries are viewed mod $n$. We set moreover $s_i (a,l)= \prod_{m=0}^{l-1} {h_i(a + im)}$, where all the entries are viewed mod $n$ and $1 \leqslant l \leqslant \lfloor n/i \rfloor$.

Throughout, the natural number $j$ is defined by the conditions $j \in \{0, \ldots, k-1\}$ and $n \equiv j$\: mod $k$.

\end{notat}

\end{comment}

\section*{Acknowledgements}

I would like to thank László Pyber, for bringing \cite{Lan} to my attention; and Justin Lanier, for some comments on his paper.

\section{Proof of Theorem \ref{mainthm}}

The proof will be divided in two cases: $k \leqslant n/2$ and $k > n/2$. In every case, we will find two elements $a$ and $b$ of order $k$ such that $\gen{a,b}$ is primitive. At this point, in the literature there are several results --- some of them going back to the 19th century --- that ensure that a given primitive group $G$ with some additional properties must be either alternating or symmetric.

For instance, if $G$ is $2$-transitive and contains an element with sufficiently small support, then it is either alternating or symmetric; this is due to Bochert \cite{Bo}. Manning \cite{Ma} proved a similar statement replacing $2$-transitivity with $6$-transitivity (see also \cite{Wi2}, 16 p. 42). In the same spirit, Wielandt \cite{Wi} showed that if $G$ is $t$-transitive and $t$ is large enough depending on $n$, then $G$ is either alternating or symmetric. For us these results are enough; we summarise them in the following theorem. Recall that the support of a permutation is the set of points moved by the permutation.

\begin{thm}
\label{old}
Let $G$ be a primitive permutation group of degree $n$. Then, $G$ contains $A_n$ provided one of the following conditions is satisfied:
\begin{itemize}
\item[(i)] $G$ is $2$-transitive and contains a nontrivial element $g$ with $|\text{supp}(g)| < n/3-2\sqrt n/3$.
\item[(ii)] $G$ is $6$-transitive and contains a nontrivial element $g$ with $|\text{supp}(g)| < 3n/5$.
\item[(iii)] $G$ is $t$-transitive with $t \geqslant 3 \log n$ (here and elsewhere, logs in base $2$).
\end{itemize}
\end{thm}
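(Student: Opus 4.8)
The plan is to obtain the three parts directly from the three classical theorems recalled just above, since Theorem \ref{old} is precisely a repackaging of the results of Bochert, Manning, and Wielandt into the numerical form convenient for the later application. In each case the underlying classical statement bounds from below the \emph{minimal degree} of a suitably transitive group that does not contain $A_n$ (equivalently, bounds from above the support size of its nontrivial elements), or else bounds the degree of transitivity; translating these into the stated thresholds is then a short piece of arithmetic, so the real content of Theorem \ref{old} lies entirely in the cited literature.

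For (i), I would invoke Bochert's theorem \cite{Bo} on the minimal degree of $2$-transitive groups: if $G$ is $2$-transitive of degree $n$ and $G \not\supseteq A_n$, then every nontrivial $g \in G$ has $|\mathrm{supp}(g)|$ at least a quantity of order $n/3$. Concretely, Bochert's inequality can be arranged so that the minimal degree strictly exceeds $\tfrac13(\sqrt n - 1)^2 = n/3 - 2\sqrt n/3 + 1/3$; since this is larger than the stated threshold $n/3 - 2\sqrt n/3$, the existence of a nontrivial element with $|\mathrm{supp}(g)| < n/3 - 2\sqrt n/3$ is incompatible with $G \not\supseteq A_n$, and (i) follows by contraposition. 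The only care needed is to confirm that the normalisation of Bochert's bound one cites really yields exactly the constant $n/3 - 2\sqrt n/3$.

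For (ii) and (iii) I would proceed in the same fashion. Part (ii) is Manning's refinement \cite{Ma} (see also \cite{Wi2}) of the minimal-degree estimate under the stronger hypothesis of $6$-transitivity: a $6$-transitive group of degree $n$ not containing $A_n$ has minimal degree at least $3n/5$, so any nontrivial element whose support drops below $3n/5$ forces $A_n \leqslant G$. Part (iii) is Wielandt's theorem \cite{Wi}, asserting that a $t$-transitive group of degree $n$ with $t$ large relative to $n$ — here $t \geqslant 3\log n$, logarithms in base $2$ — already contains $A_n$; one simply records the explicit threshold in the form in which it will be used.

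The argument is therefore almost entirely bookkeeping, and the one genuine obstacle is matching the precise constants: the older sources phrase their inequalities in several slightly different normalisations, so I would have to verify carefully that the chosen formulations really imply the clean thresholds $n/3 - 2\sqrt n/3$, $3n/5$, and $3\log n$, rather than merely something of the same order of magnitude. I would also flag that all three classical results are established without the classification of finite simple groups, so that Theorem \ref{old}, and hence everything built on it in the proof of Theorem \ref{mainthm}, remains CFSG-free.
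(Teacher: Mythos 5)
Your proposal is correct and coincides with what the paper does: Theorem \ref{old} is stated there precisely as a summary of the cited results of Bochert \cite{Bo}, Manning \cite{Ma} (see also \cite{Wi2}), and Wielandt \cite{Wi}, with no proof beyond these attributions, exactly as you propose. Your added care about matching the normalisation of the constants, and your observation that the whole theorem is CFSG-free, are both consistent with the paper's own remarks.
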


The previous conditions require $G$ to be multiply transitive. An old result of Marggraff \cite{Mar}, with a short proof by Levingston and Taylor \cite{LT}, gives a method to obtain this property.
\begin{thm}
\label{livin}
Let $G$ be a primitive permutation group of degree $n$. If $G$ contains a cycle fixing $m$ points, then $G$ is $(m+1)$-transitive.
\end{thm}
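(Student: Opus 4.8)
The plan is to prove Theorem~\ref{livin} by induction on the number $m$ of fixed points of the cycle. Write $\Omega$ for the permuted set, $|\Omega|=n$, and let $c\in G$ be the given cycle, with support $\Delta$ of size $\ell=n-m$ and fixed set $\Phi$ of size $m$. When $m=0$ the cycle is an $n$-cycle and the assertion reduces to the transitivity of $G$, which is immediate from primitivity; when $m=1$ the cycle is an $(n-1)$-cycle fixing a single point $\alpha$, and it acts transitively on $\Omega\setminus\{\alpha\}$, so $G_\alpha$ is transitive there and $G$ is $2$-transitive.

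For the inductive step I would use the standard equivalence that a transitive group $G$ is $(m+1)$-transitive if and only if, for a point $\alpha$, the stabiliser $G_\alpha$ is $m$-transitive on $\Omega':=\Omega\setminus\{\alpha\}$. Choosing $\alpha\in\Phi$ to be a fixed point of $c$, we have $c\in G_\alpha$, and as a permutation of $\Omega'$ it is a cycle of the same length fixing exactly $m-1$ points. Hence, \emph{provided $G_\alpha$ is primitive on $\Omega'$}, the induction hypothesis applied to $G_\alpha$ in degree $n-1$ gives that $G_\alpha$ is $m$-transitive on $\Omega'$, and therefore $G$ is $(m+1)$-transitive. Everything thus reduces to a single point.

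The heart of the argument, and the step I expect to be the main obstacle, is to establish that the stabiliser $G_\alpha$ is \emph{primitive} on $\Omega'$ --- a genuine strengthening of transitivity which does not follow from $2$-transitivity of $G$ alone and must exploit the cycle. Supposing for contradiction that $\mathcal{B}$ is a nontrivial block system for $G_\alpha$ on $\Omega'$, I would first extract the rigid combinatorics forced by $c$ being a \emph{single} cycle: the block containing any fixed point of $c$ is $c$-invariant; a $c$-invariant block, having $c$-invariant intersection with the single $c$-orbit $\Delta$, either contains all of $\Delta$ or is disjoint from it; and the blocks genuinely meeting $\Delta$ form a single $c$-orbit and meet $\Delta$ in equally sized arithmetic progressions. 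These constraints pin down how $\mathcal{B}$ is positioned relative to $\Delta$ and $\Phi$.

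The remaining and most delicate task is to turn this structure into a contradiction with the primitivity of $G$ on the \emph{full} set $\Omega$: one uses the transitivity of $G$ to link the extra point $\alpha$ to the blocks and thereby assembles from $\mathcal{B}$ a nontrivial $G$-invariant partition of $\Omega$, contrary to hypothesis. This is precisely where primitivity of $G$ (and not merely its transitivity) is indispensable, and organising the block data around $\alpha$ into an honest $G$-congruence is the crux of the whole proof.
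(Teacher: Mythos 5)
Your proposal is not a proof but a reduction, and the step you defer is the entire theorem. Note first that the paper itself does not prove this statement: it is Marggraff's theorem, quoted with references to Marggraff \cite{Mar} and to the short proof of Levingston and Taylor \cite{LT}, so the comparison here is with that classical argument. Your induction skeleton (pass to the stabiliser $G_\alpha$ of a fixed point $\alpha$ of the cycle, note that $c$ is a cycle on $\Omega'=\Omega\setminus\{\alpha\}$ with $m-1$ fixed points, and apply the inductive hypothesis) is indeed the classical organisation, and your base cases $m=0,1$ are fine. But everything then hinges on the claim that $G_\alpha$ is primitive on $\Omega'$, which you explicitly leave unproven. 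For $m\geqslant 2$ that claim is essentially \emph{equivalent} to the theorem: once the theorem is known, $G$ is $(m+1)$-transitive, hence $3$-transitive, hence $G_\alpha$ is $2$-transitive and in particular primitive on $\Omega'$; conversely, granted the claim, the theorem follows by your easy induction. So what you have written reduces the statement to a restatement of itself, and the mathematical content --- the part Levingston and Taylor actually supply, via Jordan-set arguments (the pointwise stabiliser of the complement of the cycle's support is transitive on the support; overlapping Jordan sets have Jordan union; maximal Jordan sets against primitivity) --- is absent.

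Moreover, the route you sketch for the missing step is doubtful, not merely incomplete. The block combinatorics you record (blocks through fixed points of $c$ are $c$-invariant; a $c$-invariant block meets $\Delta$ in $\emptyset$ or $\Delta$; blocks inside $\Delta$ are arithmetic progressions forming one $c$-orbit) is correct but elementary, and it leaves untouched the case in which a single block of $\mathcal{B}$ contains all of $\Delta$ --- which is possible a priori whenever $|\Delta|\leqslant m-1$ and is where one needs genuinely more (in effect, control on primitive groups containing a cycle of small support). More fundamentally, ``assembling from $\mathcal{B}$ a nontrivial $G$-invariant partition of $\Omega$'' has no natural implementation: a $G_\alpha$-invariant partition of $\Omega'$ does not induce a $G$-invariant partition of $\Omega$, and it cannot in general, since $G$ is $2$-transitive here (by Jordan's theorem) while $2$-transitive groups routinely have imprimitive point stabilisers --- e.g.\ $\mathrm{AGL}_1(p)$ with $p-1$ composite, whose point stabiliser is cyclic acting regularly. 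Thus any correct argument must exploit the single-cycle structure far beyond the observations you list; this is precisely what the cited short proof does, and you should either reproduce that argument or cite it, as the paper does.
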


We remark that Theorem \ref{old} can be dramatically strenthened using the CFSG. For instance, using the Schreier Conjecture O'Nan \cite{Ona1} \cite{Ona2} showed that in point (iii) it is possible to take $t=6$. Point (ii) becomes therefore useless. Moreover, combining Theorem \ref{livin} with the classification of $4$-transitive groups, it turns out that a primitive group containing a cycle fixing at least $3$ points is either alternating or symmetric (see for instance \cite J). For our purposes, however, these improvements would help only in small degree ($n \leqslant 90$), so that it does not seem the case to rely on them.

\begin{comment}
The common strategy is to find two elements $a$ and $b$ of order $k$ such that $\gen{a,b}$ is primitive and contains a cycle fixing at least 3 points. In view of the following theorem, that can be found in \cite{J}, this will be sufficient in order to conclude that $\gen{a,b}$ contains $A_n$.

\begin{thm}
\label{cicli}
A primitive group of degree $n$ containing a cycle fixing at least $3$ points is either $A_n$ or $S_n$.
\end{thm}

The previous theorem relies on the Classification of Finite Simple Groups. However, we we will use this result quite superficially: we could avoid to rely on it if only we checked singularly some cases of small degree (see Remark ???).

There is an old theorem of Marggraff \cite{Ma} which asserts that a primitive permutation group containing a cycle fixing $m$ points is $(m+1)$-transitive. Almost a century later, Livingston and Taylor \cite{LT} gave a short proof of this result. Combining this therem with the classification of $4$-transitive groups (that follows from the CFSG) one obtain Theorem \ref{cicli}. 
\end{comment}

Once we have found $a$ and $b$ such that $\gen{a,b}$ is primitive and satisfies one of the conditions of Theorem \ref{old}, in order to conclude the proof of Theorem \ref{mainthm} it will be sufficient to control the parity of $a$ and $b$. More precisely, we will be able to choose $a$ and $b$ both odd (in which case $\gen{a,b}=S_n$) and both even (in which case $\gen{a,b}=A_n$).

It seems that we are proving too much choosing $a$ and $b$ both odd. However, the same pair $(a,b)$ will be even for some values of $n$ and $k$, and will be odd for some other values: so that, essentially, we will prove the statement for $A_n$ and $S_n$ all at once (see the proof of Claim \ref{clai1} and the table after it). 

As this may help in the exposition, we restate Theorem \ref{mainthm} in view of the last considerations. We say that an element of $S_n$ has \textit{sign $+1$} (resp. \textit{sign $-1$}) if it is an even permutation (resp. odd permutation).

\begin{thm}
\label{mainprop}
Consider the set $\Omega$ of all triples $(n,k,\mathfrak s)$ such that $n \geqslant k \geqslant 3$ are natural numbers, $\mathfrak s \in \{+1, -1\}$, and the following conditions are satisfied:
\begin{itemize}
\item[(i)] if $k$ is odd, then $\mathfrak s=+1$.
\item[(ii)] if $k \in \{n-1, n\}$ is a power of $2$, then $\mathfrak s=-1$.
\item[(iii)] $(6,6,+1) \not\in \Omega$.
\end{itemize}
Then, for every $\omega=(n,k,\mathfrak s) \in \Omega$, $S_n$ contains two elements $a_{\omega}$ and $b_{\omega}$ of order $k$, of sign $\mathfrak s$, such that $\gen{a_{\omega},b_{\omega}}$ is primitive and satisfies one of the conditions of Theorem \ref{old}.
\end{thm}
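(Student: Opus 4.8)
The plan is to treat separately the two regimes $k \leqslant n/2$ (case (A)) and $k > n/2$ (case (B)), and in each to exhibit explicit generators built from the consecutive-point cycles $h_i(a)$ and their products $s_i(a,l)$. Writing $n = qk + j$ with $0 \leqslant j \leqslant k-1$, the common template is to take $a_\omega$ (or a short word in $a_\omega,b_\omega$) to be a single cycle fixing a prescribed number of points, and $b_\omega$ to be a product of $k$-cycles whose support overlaps that of $a_\omega$ and sweeps up the remaining points. The first thing to prove is that $\gen{a_\omega,b_\omega}$ is transitive: with the relative shift between $a_\omega$ and $b_\omega$ chosen so that consecutive blocks of length $k$ overlap, the supports of the generating cycles form a connected interval cover of $\{1,\dots,n\}$, and transitivity follows at once.

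The next step is primitivity, which I would establish by a direct analysis of the admissible block systems: a single cycle can meet the blocks of a $G$-invariant partition only in a very restricted pattern (the blocks meeting its support are permuted regularly and cut the support into equal pieces), so the cycle structure of the generators severely constrains, and ultimately rules out, any nontrivial partition. Once primitivity is in hand, I would feed the distinguished single cycle into Theorem \ref{livin}: in case (A) it is essentially a $k$-cycle fixing $n-k \geqslant n/2$ points, so $\gen{a_\omega,b_\omega}$ is $(n-k+1)$-transitive, and since its support is $k < 3n/5$, clause (ii) of Theorem \ref{old} applies (clause (iii) for $n$ large, and clause (i) when $k < n/3 - 2\sqrt{n}/3$). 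In case (B) the single $k$-cycle fixes only $j = n-k$ points; for $j$ large this already suffices for clause (iii), while for $k$ close to $n$ I would instead extract from a commutator of two heavily overlapping $k$-cycles an element of very small support and combine it, via clause (i), with the $2$-transitivity provided by a cycle with a single fixed point.

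Finally comes the parity bookkeeping, which I expect to be the main obstacle. The key elementary fact is that every cycle length of an order-$k$ element divides $k$; hence if $k$ is odd all cycles are odd and the sign is forced to be $+1$ (clause (i) of the statement), whereas if $k$ is even one may append or delete a disjoint transposition on two further points, flipping the sign without changing the order. Realising both values of $\mathfrak s$ therefore reduces to checking that there is always room for such an adjustment while preserving primitivity and the cycle used for Theorem \ref{livin}; the cases with no room are precisely $k \in \{n-1,n\}$ a power of $2$ and $(n,k)=(6,6)$, where every order-$k$ element is forced to be a single full-length cycle — exactly the listed exceptions. In practice the same pair of formulas yields sign $+1$ for some residues of $n$ modulo $k$ and sign $-1$ for others, so this step amounts to organising the parities by residue class (the table) and recording where the transposition adjustment is needed.

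The delicate subcases I anticipate are, first, $k = n$ with $n$ an odd prime power: here every order-$k$ element must be an $n$-cycle with no fixed points, so $2$-transitivity cannot be read off from Theorem \ref{livin} directly and must instead be produced by exhibiting a word in $a_\omega,b_\omega$ that is a cycle with exactly one fixed point, after which clause (i) applies to a small-support commutator. Second, the finitely many small $n$ for which the numerical inequalities in Theorem \ref{old} fail will have to be verified by hand. These two pockets aside, the uniform construction together with the parity toggle should cover all of $\Omega$.
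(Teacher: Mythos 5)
Your overall strategy --- explicit generators built from consecutive cycles, transitivity from overlapping supports, primitivity by direct block analysis, then Theorem \ref{livin} feeding Theorem \ref{old} --- is exactly the paper's strategy, and in the regime $k \leqslant n/2$ your route through clause (ii) (a cycle of support $k \leqslant n/2 < 3n/5$ fixing at least $5$ points gives $6$-transitivity via Theorem \ref{livin}) is, if anything, tidier than the paper's, which instead arranges for $a^{-1}b$ to be a long cycle fixing at least $\lceil 3\log n - 1\rceil$ points and invokes clause (iii), checking $n < 90$ by machine. (One wrinkle: when $k$ is even and $\mathfrak s = +1$ your generator is a $k$-cycle times a disjoint transposition, so $\gen{a_\omega,b_\omega}$ contains no obvious $k$-cycle and the distinguished cycle must again be a word such as $a_\omega^{-1}b_\omega$; this is repairable and the paper shows how.)

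The genuine gap is in your parity bookkeeping, and it swallows the hardest case of the theorem. You assert that the triples with ``no room'' for the disjoint-transposition toggle are precisely $k \in \{n-1,n\}$ a power of $2$ and $(6,6)$, i.e.\ exactly the excluded ones. That is false: the toggle has no room whenever $k \in \{n-1,n\}$ and $k$ is even, full stop (for $k=n-1$ only one point is free, for $k=n$ none). When such a $k$ is not a power of $2$ --- say $(n,k,\mathfrak s)=(12,12,+1)$, which does lie in $\Omega$ --- even elements of order $k$ exist (e.g.\ cycle type $(3,4,4,1)$ in $S_{12}$), but they are neither $k$-cycles nor $k$-cycles times a transposition; a genuinely different cycle type is forced. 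This is the paper's case (B): writing $k = 2^m q$ with $q \geqslant 3$ odd, it takes both generators of cycle shape $(q, 2^m, \ldots, 2^m, 2, \ldots, 2)$, one a translate of the other by $q-2$, splitting a $2^m$-cycle when needed to fix the sign; transitivity and primitivity then require a separate argument hinging on the odd length $q$ (the $q$-cycle of $a$ cannot fix a block, and the odd divisor $r$ of $q$ cannot divide the even cycle lengths), after which a power of $a$ is a $q$-cycle with $q \leqslant n/2$ and clause (ii) finishes, with $(7,6)$ done by hand. Nothing in your proposal produces these generators --- your template only builds full-length cycles possibly adjusted by a transposition --- and your list of anticipated delicate subcases ($k=n$ an odd prime power, which is in fact easy via $b = a\,(1,2,3)$, plus small $n$) does not include this one. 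So, as written, your plan proves the statement only on $\Omega$ minus the triples $(n,k,+1)$ with $k \in \{n-1,n\}$ even and not a power of $2$, which is precisely the new content of the theorem relative to Lanier's and Miller's results.
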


Rather then dividing neatly all the cases, the strategy is to start with two somehow reasonable $a$ and $b$, and to modify them suitably whenever they do not work. With no risk of confusion, along the proofs the dependence on $\omega$ will be omitted: so that our elements will be $a$ and $b$. Also the symbol $\mathfrak s$ will often be omitted.

\subsection{The case $k \leqslant n/2$}

This is the case that will require the most computations. Here we will always apply Theorem \ref{livin} in order to use condition (iii) in Theorem \ref{old}. In other words, we will show that $\gen{a,b}$ contains a cycle fixing at least $\lceil 3 \log n -1 \rceil$ points. The cycle we are looking for will always be $a^{-1}b$.

We will first obtain $a$ and $b$ in every case, showing that $a^{-1}b$ is a cycle, but not caring about the number of points it fixes. Then, once all the $a$'s and $b$'s will be defined, we will make sure that enough points are fixed; and finally we will show that $\gen {a,b}$ does not preserve any system of blocks. This is perhaps logically backwards, but the primitivity is the longest part, while the number of fixed points is fast, so that we prefere to present the proof in this order. Throughout, when we write the cycle decomposition of an element of $S_n$ we usually avoid to write the fixed points.

\begin{cla}
\label{clai1}
For every $\omega=(n,k,\mathfrak s) \in \Omega$ with $k \leqslant n/2$, there exist $a_{\omega}$ and $b_{\omega}$ of order $k$, of sign $\mathfrak s$, such that $\gen{a_{\omega},b_{\omega}}$ is transitive and $a_{\omega}^{-1}b_{\omega}$ is a cycle.
\end{cla}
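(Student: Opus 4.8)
The plan is to construct, for each $\omega = (n,k,\mathfrak{s})$ with $k \leqslant n/2$, an explicit pair $a, b \in S_n$ by juxtaposing ``step $k$-cycles'' that partition the ground set $\{1,\ldots,n\}$ into consecutive blocks. The natural starting point is to set $a$ to be a product of $\lfloor n/k \rfloor$ disjoint $k$-cycles on the intervals $[1,k], [k+1,2k], \ldots$, and $b$ to be the analogous product \emph{shifted} by one block, so that $a$ and $b$ overlap in all but one or two blocks. The key computational feature I would aim for is that $a^{-1}b$ telescopes: on the overlap the contributions of $a$ and $b$ should cancel in a controlled way, leaving a single long cycle. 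Concretely, I would verify that $a^{-1}b$ acts as one cycle by checking that the ``interfaces'' between consecutive blocks are threaded together into a single orbit — this is where the precise alignment of the two shifted systems must be engineered.

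The main structural issue is that $a$ and $b$ as just described have a fixed parity determined by $k$ and $\lfloor n/k \rfloor$ (a $k$-cycle has sign $(-1)^{k-1}$, so a product of $\lfloor n/k\rfloor$ of them has sign $(-1)^{(k-1)\lfloor n/k\rfloor}$), whereas the claim requires \emph{both} signs $\mathfrak s = \pm 1$ to be achievable (subject to the constraints defining $\Omega$). So the real work is to produce, from the ``reasonable'' base pair, a second pair of the opposite sign while preserving order $k$, transitivity, and the property that $a^{-1}b$ is a cycle. The plan is to absorb the leftover $j$ points (where $n \equiv j \bmod k$, $0 \leqslant j \leqslant k-1$) into one of the blocks, and to toggle parity by attaching a short auxiliary cycle — a transposition $s_2(\cdot,1)$ to flip sign, or a $3$-cycle $s_3(\cdot,1)$ to adjust without changing parity — to $a$ or to $b$. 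When $k$ is even this gives genuine freedom in both signs; when $k$ is odd the elements of order $k$ are forced to be even, matching condition (i) of $\Omega$, so only $\mathfrak s = +1$ need be realised.

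The delicate sub-case is when both $k$ and $\lfloor n/k\rfloor$ are even and $j \in \{0,1\}$, where a single transposition cannot be appended without creating a fixed point that breaks either the order-$k$ condition or transitivity. Here I anticipate replacing the last full $k$-cycle by the product of a shorter $k$-cycle with a pair of $(k/2)$-cycles (an element of the shape built from $s_{k/2}$), which keeps order $k$, changes the sign in the desired way, and still lets $a^{-1}b$ be computed as a single cycle. This is precisely the ``$z$'' and ``$w$'' construction foreshadowed in the commented-out display, and I expect verifying transitivity and the single-cycle property in this modified case to be the most error-prone part.

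Throughout, transitivity of $\gen{a,b}$ I would establish directly from the fact that $a^{-1}b$ is already an $n$-cycle or nearly so: if $a^{-1}b$ is a cycle on all $n$ points then transitivity is immediate, and if it is a cycle fixing a few points then I would check that $a$ (or $b$) moves those fixed points into the support of $a^{-1}b$, so the group generated acts transitively. I would defer the count of fixed points of $a^{-1}b$ (needed later for Theorem~\ref{livin} and condition (iii) of Theorem~\ref{old}) and the verification of primitivity, as the surrounding text indicates these are handled after all the $a$'s and $b$'s are in hand; for this claim it suffices to exhibit the pair, confirm order $k$ and the prescribed sign, confirm $a^{-1}b$ is a cycle, and deduce transitivity. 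The main obstacle, as noted, is the simultaneous juggling of the four requirements (order, sign, single-cycle, transitivity) across the parity sub-cases, especially the $j \in \{0,1\}$ corner.
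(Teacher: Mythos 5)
Your plan is recognisably the paper's own strategy (the same base pair of products of step $k$-cycles, disjoint transpositions to flip sign, composition with a $3$-cycle, and the splitting of the last $k$-cycle into two $(k/2)$-cycles when $j\in\{0,1\}$), but the two details that actually carry the proof are wrong or missing. The first is the alignment of $b$. You take $b$ to be $a$ ``shifted by one block'', i.e.\ by $k$ positions. With that shift every interior block $\{ik+1,\ldots,(i+1)k\}$ is a cycle of \emph{both} $a$ and $b$, hence an orbit of $\gen{a,b}$: transitivity fails outright, and $a^{-1}b$ is the identity on the overlap rather than ``telescoping'' into a long cycle. The construction works only because the shift is by $k-1$ positions, so that each cycle of $b$, namely $(ik,ik+1,\ldots,(i+1)k-1)$, straddles two consecutive cycles of $a$; this straddling is simultaneously what chains all the blocks together (transitivity is then immediate, before $a^{-1}b$ is ever computed) and what makes $a^{-1}b$ a single cycle when $j=k-1$, or two cycles of equal length that are merged by multiplying $b$ on the right by $(k,k+1,k+2)$ when $j\neq k-1$. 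You explicitly defer this (``the precise alignment \ldots must be engineered''), but that alignment \emph{is} the proof; as written, the base pair you propose does not satisfy either conclusion of the claim.

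The second gap is that your transitivity argument rests on a false picture of $a^{-1}b$. In the correct construction $a^{-1}b$ is a cycle whose support has size roughly $2\floor{n/k}+k+2j$, so it fixes at least about a third of the $n$ points for every admissible $k$, and almost all of them when $k\approx\sqrt n$; indeed the sequel (Claim \ref{claim2}) depends on $a^{-1}b$ fixing at least $\lceil 3\log n-1\rceil$ points, so ``an $n$-cycle or nearly so'' is exactly what must \emph{not} happen. Your fallback --- check that $a$ or $b$ moves the fixed points of $a^{-1}b$ into its support --- also fails as stated, since $a$ sends most fixed points of $a^{-1}b$ (the interior points of the blocks) to other fixed points; what is true, and would repair this route, is that every $\gen{a}$-orbit meets $\text{supp}(a^{-1}b)$, because each block contains points $\equiv 0,1 \pmod k$. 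Finally, a smaller inaccuracy: the delicate splitting case is not only ``$k$ and $\floor{n/k}$ both even with $j\in\{0,1\}$''; it is needed whenever $k$ is even, $j\in\{0,1\}$, and the required sign is opposite to $(-1)^{\floor{n/k}}$ --- in particular also for $\mathfrak s=+1$ with $\floor{n/k}$ odd --- and it has its own exceptional sub-case $(k,j)=(4,1)$, where $a^{-1}b$ is already a cycle and no $3$-cycle correction is wanted.
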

\begin{proof}

Consider $a=(1, \ldots, k)(k+1, \ldots, 2k)\cdots ((\floor{n/k}-1)k+1, \ldots, \floor{n/k}k)$ and $b$ the translation of $a$ on the right of $k-1$ positions, that is, $b=(k, \ldots, 2k-1)(2k, \ldots, 3k-1)\cdots (\floor{n/k}k, \ldots, (\floor{n/k}+1)k-1)$, where all the entries are viewed mod $n$. Throughout, these will be called the \textit{original} $a$ and $b$. It is clear that $\gen {a,b}$ is transitive. Moreover, unless specifically pointed out, any modification we will perform on $a$ and $b$ will only (possibly) unify some of their orbits, so that transitivity will not be compromised. Let $j$ be defined by the conditions $j \in \{0, \ldots, k-1\}$ and $n \equiv j$\: mod $k$.

 If $j=k-1$, direct computation shows that $a^{-1}b$ is a cycle: $a^{-1}b=(1,k+1,2k+1, \ldots, \floor{n/k}k+1, \floor{n/k}k+2, \ldots, n, \floor{n/k}k, (\floor{n/k}-1)k, \ldots, k, k-1, \ldots, 2)$, where what happens in the dots should be clear. We choose these $a$ and $b$ when $j=k-1$ and the sign is good. Note that if $k$ is odd then $\mathfrak s = +1$; hence certainly we choose these $a$ and $b$ when $k$ is odd. In order to change sign, we multiply $a$ and $b$ respectively by $(\floor{n/k}k+1, \floor{n/k}k+2$) and $(k-2,k-1)$. Note that the support of these transpositions is disjoint from the support of, respectively, $a$ and $b$, so that the new elements still have order $k$. Moreover, the result of this modification on $a^{-1}b$ amounts to multiplying it on the left by $(\floor{n/k}k+1, \floor{n/k}k+2$) and on the right by  $(k-2,k-1)$. Since the points of each transposition are consecutive points in the cycle $a^{-1}b$, the resulting element is the same cycle, in which though $k-1$ and $\floor{n/k}k+2$ have disappeared.

If $j \neq k-1$, the (original) $a^{-1}b$ is not a cycle; it is instead the product of 2 cycles of equal length (plus 1-cycles). However, it is easy to turn this element into a cycle: it is sufficient to multiply it on the right by $(k, k+1, k+2)$. This is equivalent to multiplying $b$ on the right by $(k, k+1, k+2)$, that is, to swapping $k$ and $k+1$ in the first cycle of $b$. It is easy to check that the resulting cycle is $(k+1, 2k+1, \ldots, \floor{n/k}k+1, \floor{n/k}k+2, \ldots, n, 1, k+2, k,  k-1, \ldots, k-j, \floor{n/k}k, (\floor{n/k}-1)k, \ldots, 2k)$.\footnote{If $j=1$, $\floor{n/k}k+1=n$ and it is meant that $\floor{n/k}k+2$ does not appear in the writing. In the same way, if $j=0$ it is meant that $k-1$ does not appear; and if $\floor{n/k}=2$ it is meant that $(\floor{n/k}-1)k$ does not appear.} We choose these $a$ and $b$ when $j \neq k-1$ and the sign is good (as above, certainly when $k$ is odd). In order to change sign, we divide the cases $j \in \{0,1\}$ and $j \not\in \{0,1\}$.

If $j \not\in \{0,1\}$, we may act exactly as in the case $j=k-1$. If $j \in \{0,1\}$, however, there is no space for multiplying by a transposition. We then go back to the original $a$ and $b$, and split the last cycle of the original $a$ and the last cycle of the original $b$ in two parts of length $k/2$. Note that, since $j \in \{0,1\}$, the first $(k/2)$-cycle of $b$ contains $n$. It is easy then to see that $\gen{a,b}$ is transitive.

In fact, the first $(k/2)$-cycle of $b$ contains 1 unless $(k,j)=(4,1)$. This feature implies different behaviors. In particular, if $(k,j)=(4,1)$ then $a^{-1}b$ is a cycle: $a^{-1}b=(1,5,\ldots, n-4, n-2, n, n-1, n-5, \ldots, 4, 3)$, where the steps in the dots again should be clear. In the other cases, instead, $a^{-1}b$ is a product of 2 cycles of equal length (plus 1-cycles). As we have already done, multiplying on the right by $(k, k+1, k+2)$ turns it into a cycle: $a^{-1}b=(k+1, 2k+1, \ldots, (\floor{n/k}-1)k+1, (\floor{n/k}-1/2)k+1, \floor{n/k}k+1, 1, k+2, k, k-j, (\floor{n/k}+1/2)k, \floor{n/k}k, (\floor{n/k}-1)k, \ldots, 2k)$ (where if $\floor{n/k}=2$ then $2k+1$ and $(\floor{n/k}-1)k$ do not appear; and if $j=0$ then $\floor{n/k}k+1$ and $k-j$ do not appear).
\end{proof}

Although perhaps not strictly necessary, for the convenience of the reader we list $a$ and $b$ in the different cases.

Here (and only here) we adopt the notation used in \cite{Lan}, as it may help in compacting writings. For every $1 \leqslant i \leqslant n$, we denote with $h_i(a)$ a step $i$-cycle starting at $a$, namely, the cycle $(a, a+1, \ldots, a+i-1)$, where all the entries are viewed mod $n$. We set moreover $s_i (a,l)= \prod_{m=0}^{l-1} {h_i(a + im)}$, where all the entries are viewed mod $n$ and $1 \leqslant l \leqslant \lfloor n/i \rfloor$. For instance, the original $a$ and $b$, that were defined in the proof of Claim \ref{clai1}, are respectively $s_k (1,\floor{n/k})$ and $s_k (k,\floor{n/k})$. Since we will write these elements several times below, we take the chance to further compact notations and we call them respectively $a_0$ and $b_0$. Consider the following pair of $a$'s and $b$'s (the meaning of the conditions on $j$ are explained below; recall that $j \in \{0, \ldots, k-1\}$ and $n \equiv j$\: mod $k$).

\begin{itemize}

\item[(i)] $a= a_0$

$b=b_0$ \qquad\hspace*{\fill}$j=k-1$

\item[(ii)] $a=a_0$

$b=b_0\cdot s_3(k,1)$ \qquad\hspace*{\fill}$j \neq k-1$

\item[(iii)] $a=a_0 \cdot s_2(\floor{n/k}k+1,1)$

$b=b_0 \cdot s_2(k-2,1)$ \qquad\hspace*{\fill}$j=k-1$

\item[(iv)] $a=a_0 \cdot s_2(\floor{n/k}k+1,1)$

$b=b_0 \cdot s_2(k-2,1) \cdot s_3(k,1)$ \qquad\hspace*{\fill}$j \not\in \{0,1,k-1\}$

\item[(v)] $a=s_k (1,\floor{n/k}-1) \cdot s_{k/2} ((\floor{n/k}-1)k+1, 2)$

$b=s_k (k,\floor{n/k}-1)\cdot  s_{k/2} (\floor{n/k}k, 2) \cdot s_3(k,1)$ \qquad\hspace*{\fill}$j \in \{0,1\},$ 

\qquad\hspace*{\fill} $(k,j) \neq (4,1)$

\item[(vi)] $a=s_k (1,\floor{n/k}-1) \cdot s_{k/2} ((\floor{n/k}-1)k+1, 2)$

$b=s_k (k,\floor{n/k}-1)\cdot  s_{k/2} (\floor{n/k}k, 2)$ \qquad\hspace*{\fill}$(k,j) = (4,1)$

\end{itemize}

In each pair, $a$ and $b$ have the same sign. Moreover, elements in (i) - (ii) have the same sign, opposite with respect to elements in (iii) - (vi). In order to obtain a generating pair we proceed as follows. Given $G_n \in \{A_n, S_n\}$, we look at the pairs which consist of even permutations if $G_n=A_n$, and of odd permutations if $S_n=A_n$. As already noticed in the proof of Claim \ref{clai1}, we look at (iii) - (vi) only if $k$ is even. Indeed, if $k$ is odd then $G_n=A_n$ and the even elements are (i) - (ii).

Once we have chosen among the two parts (i) - (ii) and (iii) - (vi), there is a unique pair depending on the congruence of $n$ mod $k$: that is the generating pair we are looking for. Actually, the cases $(n,k, \mathfrak s)=(6,3,+1),(7,3,+1),(8,3,+1)$ are exceptions that will be handled separately.

It is of course possible to give a concrete description of the different cases. More precisely, if $G_n=A_n$ we must look at (i) - (ii) if and only if either $k$ is odd or $k$ and $\floor{n/k}$ are both even. Conversely, if $G_n=S_n$ we must look at (i) - (ii) if and only if $k$ is even and $\floor{n/k}$ is odd.

We go on with the proof. Now we look at the number of points fixed by the cycle $a^{-1}b$.

\begin{cla}
\label{claim2}
For every $\omega=(n,k,\mathfrak s) \in \Omega$, with $k \leqslant n/2$ and $n \geqslant 90$, $a_{\omega}^{-1}b_{\omega}$ fixes at least $\lceil 3 \log n -1 \rceil$ points. If $n < 90$, the thesis of Theorem \ref{mainprop} is satisfied. It is necessary to change $a_{\omega}$ and $b_{\omega}$ only in the cases $(6,3,+1), (7,3,+1)$ and $(8,3,+1)$.

\end{cla}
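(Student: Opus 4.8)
For every $\omega=(n,k,\mathfrak s) \in \Omega$ with $k \leqslant n/2$ and $n \geqslant 90$, the cycle $a_{\omega}^{-1}b_{\omega}$ fixes at least $\lceil 3\log n - 1\rceil$ points; and for $n < 90$ one verifies the thesis of Theorem~\ref{mainprop} directly, modifying $a_\omega,b_\omega$ only in the three listed exceptional cases.

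Let me think about how to approach this.

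The claim has two parts: an asymptotic fixed-point count for $n \geq 90$, and a finite check for $n < 90$.

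**For the large-$n$ part.** The key quantity is: how many of the $n$ points does the cycle $a^{-1}b$ fix? Since $a^{-1}b$ is a single cycle of some length $\ell$, it fixes exactly $n - \ell$ points. So I need to show $\ell \leq n - \lceil 3\log n - 1\rceil$, i.e., that the cycle misses enough points.

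From the explicit cycle formulas in Claim~\ref{clai1}, the points moved by $a^{-1}b$ are essentially: a few "transition" points near the ends plus, within each length-$k$ block of $a$, only the entries congruent to $\pm 1 \bmod k$. So within each full $k$-cycle of $a$ (except the special first, second, and last blocks), exactly $k-2$ of the $k$ points are *fixed* by $a^{-1}b$. There are roughly $\lfloor n/k\rfloor \geq 2$ such blocks.

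Let me estimate the fixed-point count more carefully. Each "generic" $k$-cycle of $a$ contributes $k-2$ fixed points of $a^{-1}b$. With $\lfloor n/k \rfloor$ cycles total, subtracting the at-most-three special blocks, the number of fixed points is at least roughly $(\lfloor n/k\rfloor - 3)(k-2)$. I want this to exceed $3\log n - 1 \approx 3\log n$.

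**The crux: splitting into two regimes.** The bound $(\lfloor n/k\rfloor - 3)(k-2)$ is small only when both factors are small, i.e. when $k$ is small *and* $\lfloor n/k\rfloor$ is small. But $n = k \cdot \lfloor n/k\rfloor + j \leq k\lfloor n/k\rfloor + k$, so if both $k$ and $\lfloor n/k\rfloor$ are bounded, then $n$ is bounded. Hence for $n$ large, at least one factor is large. This is exactly the dichotomy the draft's earlier (commented-out) version of the claim exploited: "we have at least 3 fixed points unless $n \leq 6k \leq 36$."

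Concretely, I would proceed as follows:

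**Step 1.** Recall (from the cycle expressions in Claim~\ref{clai1}) that a point of the form $mk + r$ with $2 \leq r \leq k-1$, lying in an "interior" $k$-cycle of $a$ (not the first, second, or last block, and not a block that was split in cases (v)–(vi)), is fixed by $a^{-1}b$. I would make this precise by reading off, from the displayed cycles, exactly which residues mod $k$ appear among the moved points — these are the residues $0, 1$ (plus a bounded number of exceptional points coming from the end-transitions and from the $s_3$, $s_2$, $s_{k/2}$ modifications). Thus each interior block contributes at least $k-2$ fixed points, and the total number of *moved* points is at most $2\lfloor n/k\rfloor + C$ for an absolute constant $C$ (say $C = 10$, generously bounding the finitely many end-effects and the extra points introduced by the sign-changing and cycle-joining factors).

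**Step 2.** Hence the number of fixed points is at least
\[
n - \bigl(2\lfloor n/k\rfloor + C\bigr) \geq n - 2(n/k) - C.
\]
When $k \geq 7$, this is at least $n - 2n/7 - C = \tfrac{5}{7}n - C$, which for $n \geq 90$ comfortably exceeds $\lceil 3\log n - 1\rceil$ (since $3\log n$ grows far slower than linearly). So the only danger is small $k$, namely $k \in \{3,4,5,6\}$.

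**Step 3 (small $k$).** For fixed small $k$, I instead use the *other* factor: the count $(\lfloor n/k\rfloor - 3)(k-2)$ grows linearly in $\lfloor n/k\rfloor \sim n/k$. Even for $k=3$ this gives about $(n/3 - 3)\cdot 1 = n/3 - 3$ fixed points, again linear in $n$ and hence $\geq \lceil 3\log n - 1\rceil$ for all $n \geq 90$. I would just check that for each $k \in \{3,4,5,6\}$ the linear lower bound $(\lfloor n/k\rfloor - 3)(k-2)$ beats $3\log n$ once $n \geq 90$; this is a one-line inequality in each case.

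Combining Steps~2 and~3 covers all $k \leq n/2$ with $n \geq 90$.

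**For the finite part $n < 90$.** This is a finite computation. For each triple $(n,k,\mathfrak s) \in \Omega$ with $n < 90$ and $k \leq n/2$, the elements $a_\omega, b_\omega$ are explicitly defined, and one checks (by hand or machine) that $\langle a_\omega, b_\omega\rangle$ is primitive and contains a cycle fixing enough points — or more directly that it satisfies one of the conditions of Theorem~\ref{old} — so it contains $A_n$; together with the prescribed sign this yields $A_n$ or $S_n$ as required. The only triples for which the listed $a_\omega, b_\omega$ fail are $(6,3,+1),(7,3,+1),(8,3,+1)$; for these I would exhibit replacement elements (for instance the ones recorded in the earlier draft: $b=(3,4,5)$ for $(6,3)$, $b=(1,2,7)(3,4,5)$ for $(7,3)$, $b=(1,6,7)(2,5,8)$ for $(8,3)$, keeping $a$ as the original $3$-cycle product) and verify primitivity directly.

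**Main obstacle.** The genuinely delicate point is Step~1: extracting a clean, uniform bound on the number of moved points of $a^{-1}b$ across all six cases (i)–(vi) of the construction. The cycle formulas differ in their end-behaviour (the footnoted degeneracies when $j \in \{0,1\}$, when $\lfloor n/k\rfloor = 2$, or when $(k,j)=(4,1)$), and the sign-adjusting factors $s_2,s_3$ and the splitting $s_{k/2}$ each perturb a bounded number of points. The art is to absorb all of these perturbations into a single absolute constant $C$ so that the argument becomes uniform; once that is done, the rest is the elementary dichotomy "$k$ large $\Rightarrow$ many fixed points per block; $k$ small $\Rightarrow$ many blocks," each branch reducing to a routine comparison of a linear function of $n$ against $3\log n$ for $n \geq 90$. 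The threshold $90$ is presumably chosen precisely so that beyond it both branches clear the logarithmic bound with room to spare, leaving only a finite (if somewhat tedious) verification below it.
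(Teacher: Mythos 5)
Your overall architecture --- a per-block count of fixed points, a dichotomy between small $k$ and small $\floor{n/k}$, and a finite verification below the threshold with the same three exceptional triples and the same replacement elements --- matches the paper's, but your Step 1 contains a genuine error that invalidates Step 2. You assert that the number of points moved by $a^{-1}b$ is at most $2\floor{n/k}+C$ for an absolute constant $C$ (say $C=10$), i.e.\ that all end effects are $O(1)$. This is false: in the explicit cycles of Claim \ref{clai1}, the \emph{whole} first block of $a$ and the \emph{whole} tail of points fixed by the original $a$ lie in the support of $a^{-1}b$. For instance, in case (i) the cycle is $(1,k+1,2k+1,\ldots,\floor{n/k}k+1,\floor{n/k}k+2,\ldots,n,\floor{n/k}k,(\floor{n/k}-1)k,\ldots,k,k-1,\ldots,2)$, whose segments $k-1,\ldots,2$ and $\floor{n/k}k+2,\ldots,n$ contribute about $k+j$ moved points beyond those $\equiv 0,1 \bmod k$; the correct upper bound on the support is therefore $2\floor{n/k}+2k+O(1)$, not $2\floor{n/k}+O(1)$. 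Consequently the corrected form of your Step 2 estimate is $n-2n/k-2k-O(1)$, which is useless when $k$ is a constant fraction of $n$: for $k=n/2$ it equals about $-4$. The regime $\floor{n/k}\in\{2,3,4\}$ (roughly $n/5<k\leqslant n/2$) is thus covered neither by your Step 2, which is unsound there, nor by your Step 3, which treats only $k\leqslant 6$. And this regime is no technicality: it is exactly where the threshold is tight --- the paper notes that for $\floor{n/k}=3$ one only gets $m=k-4>n/4-4=18.5$ against $3\log 90-1\approx 18.48$, so $n\geqslant 90$ is sharp; your conclusion that everything ``comfortably exceeds'' the logarithmic bound is a symptom of the overcount.

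The repair is the paper's count: bound the \emph{fixed} points from below block by block instead of bounding the moved points globally. Each of the $\floor{n/k}-3$ blocks $c_2,\ldots,c_{\floor{n/k}-2}$ contributes $k-2$ fixed points (all entries not $\equiv 0,1\bmod k$), and $c_1$ contributes at least $k-4$, giving $m=(\floor{n/k}-3)(k-2)+(k-4)$, where the first term is present only if $\floor{n/k}\geqslant 3$ and the second only if $k\geqslant 4$. One then verifies $m\geqslant 3\log n-1$ for $n\geqslant 90$ in three regimes: $k\in\{3,4,5\}$ (where $\floor{n/k}$ is linear in $n$), $\floor{n/k}\in\{2,3,4\}$ (where $k>n/5$, so the single terms $k-4$ or $(k-2)+(k-4)$ suffice), and $k\geqslant 6$, $\floor{n/k}\geqslant 5$ (where either $k\geqslant\sqrt n$ or $\floor{n/k}\geqslant\sqrt n-1$). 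Your handling of $n<90$ --- machine check, with the exceptions $(6,3,+1)$, $(7,3,+1)$, $(8,3,+1)$ repaired by the replacement $b$'s you list --- coincides with the paper's.
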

\begin{proof} Consider the supports of the $k$-cycles of the original $a$, that is, $c_i = \{ik+1, ik+2, \ldots, (i+1)k\}$, $i=0, \ldots, \floor{n/k}-1$. It is easy to check that, except for $c_0, c_1$ and $c_{\floor{n/k}-1}$, each of the other $c_i$'s contains $k-2$ points that are fixed by $a^{-1}b$ in every case (namely, all the entries but those $\equiv 0, 1$\: mod $k$). Moreover, in $c_1$ there are at most $4$ points not fixed by $a^{-1}b$ (the maximum is attained when $\floor{n/k}=2$ and the second cycle of $a$ has been splitted, in which case $k+1, k+2, k+k/2+1, 2k$ are not fixed by $a^{-1}b$). Therefore, $a^{-1}b$ fixes at least 
\[m:= (\floor{n/k}-3)(k-2)+(k-4)\]
points, where the first term appears only if $\floor{n/k} \geqslant 3$ and the last term only if $k \geqslant 4$. Since either $k \geqslant \sqrt n$ or $\floor{n/k} \geqslant \sqrt n -1$, it is clear that $m$ is larger than $3 \log n -1$ provided $n$ is large enough. Now we get some more concrete estimates.

Assume $k=3$. Then $\floor{n/3} \geqslant n/3-1$ and it is easy to check that, for $n \geqslant 90$, $m$ is larger than $3 \log n-1$. In the same way we can check that the same holds for $k=4,5$. On the other hand, if $\floor{n/k}=2$ then $k \geqslant n/3$ and $m \geqslant 3 \log n -1$. In the same way we can check that the same holds for $\floor{n/k}=3,4$ (for $\floor{n/k}=3$, $n \geqslant 90$ is sharp). Hence we may assume $k \geqslant 6$ and $\floor{n/k} \geqslant 5$. Now either $k \geqslant \sqrt n$ or $\floor{n/k} \geqslant \sqrt n -1$. In both cases, $n \geqslant 90$ is enough in order to have $m \geqslant 3 \log n -1$.

For $n \leqslant 89$, it is possible to check that, except for the cases $(6,3,+1), \\ (7,3,+1)$ and $(8,3,+1)$, the $a$'s and $b$'s that we defined work: they generate $A_n$ when $\mathfrak s=+1$ and $S_n$ when $\mathfrak s=-1$. This can be done using a computer (or cheating and using the CFSG --- there are very few cases in which $a^{-1}b$ fixes less than $3$ points! --- see after Theorem \ref{livin}).

In the three remaining cases, we modify $b$. In the case $(6,3,+1)$ we may take $b=(3,4,5)$; in the case $(7,3,+1)$ we may take $b=(1,2,7)(3,4,5)$; and in the case $(8,3,+1)$ we may take $b=(1,6,7)(2,5,8)$.

\end{proof}

\begin{cla} For every $\omega=(n,k,\mathfrak s) \in \Omega$, with $k \leqslant n/2$, $\gen{a_{\omega},b_{\omega}}$ is primitive.
\label{clpri}
\end{cla}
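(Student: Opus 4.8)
The plan is to argue by contradiction. We already know from Claim \ref{clai1} that $\gen{a_{\omega},b_{\omega}}$ is transitive, so suppose it preserves a nontrivial block system $\mathcal B$ with blocks of size $d$, where $d \mid n$ and $1 < d < n$. The goal is to show that no such $\mathcal B$ can exist. It is worth stressing why this is not automatic: when $k \mid n$ the \emph{original} pair $a_0=s_k(1,\floor{n/k})$ and $b_0=s_k(k,\floor{n/k})$ both send each residue class modulo $k$ bijectively onto another (translating residues by $+1$ and wrapping correctly at the ends), so $\gen{a_0,b_0}$ genuinely preserves the ``column'' system whose blocks are $\{\,x : x\equiv r \pmod k\,\}$. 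Hence imprimitivity is a real threat, and the argument must exploit the precise deviations of the chosen $a,b$ from $a_0,b_0$.

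First I would pin down the possible shape of $\mathcal B$. The key local fact is that $a$ leaves each set $c_i=\{ik+1,\dots,(i+1)k\}$ invariant and acts on it as a single $k$-cycle, so the partition of $c_i$ obtained by intersecting with the blocks of $\mathcal B$ is invariant under a regular cyclic group of order $k$; therefore it consists of arithmetic progressions modulo $k$ with a common difference $e \mid k$, and $B\cap c_i$ is a coset of the subgroup of order $k/e$, with $e$ forced (by homogeneity of $\mathcal B$) to be the same for every $i$. Applying the same reasoning to the $k$-cycles of $b$, which straddle consecutive $c_i$'s, and then matching the two families of cosets while taking account of the wrap-around in the last row, I would conclude that $\mathcal B$ must be closely tied to the column system, namely a coarsening of it into classes modulo some $e\mid k$ with $e>1$; the only alternative, that the blocks be unions of whole sets $c_i$, is excluded at once because $b$ does not map any $c_i$ onto another $c_{i'}$ (its cycles cross the boundaries between consecutive $c_i$'s). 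Verifying that no skew or ``diagonal'' variant survives both the $a$- and $b$-constraints is part of the work here.

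It then remains to rule out the column-type systems, and this splits according to $j$. When $j\notin\{0,1\}$ the points $\{\floor{n/k}k+1,\dots,n\}$ fixed by $a$ lie in several distinct classes modulo any $e\mid k$ with $e>1$, which is incompatible with $a$ permuting those classes cyclically; this already destroys every column system, so pairs such as (i) (with $j=k-1$, no extra factor at all) are primitive for this reason alone. The delicate regime is $j\in\{0,1\}$, and in particular $k\mid n$, where $a_0,b_0$ do preserve the column system; here I would use the explicit extra factor carried by the chosen pair — the disjoint transposition $s_2(\cdot)$, the $3$-cycle $s_3(k,1)$, or the $(k/2)$-splitting $s_{k/2}(\cdot)$ of (v)--(vi) — to show that it moves at least one point out of its class modulo $e$, so that $a$ or $b$ no longer sends classes bijectively to classes (for instance, in case (ii) with $j=0$ one computes directly that $b=b_0\cdot s_3(k,1)$ sends two points of the same residue class to two different classes). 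The main obstacle is organisational rather than conceptual: the verification must be run uniformly across the six families (i)--(vi), across the value of $j \bmod k$ and the relevant parities, with care for blocks that straddle the leftover points or the split last cycle. Finally, the special pair $(k,j)=(4,1)$ of (vi), the exceptional triples $(6,3,+1),(7,3,+1),(8,3,+1)$, and all degrees $n<90$ would be checked directly, exactly as in Claim \ref{claim2}.
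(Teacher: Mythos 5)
Your overall strategy --- constrain the block systems by observing that the intersections of blocks with each $k$-cycle of $a$ must be cosets for the regular cyclic action (hence arithmetic progressions in the row), do the same for $b$, and then destroy the surviving ``column'' systems using either the fixed points of $a$ or the extra factors $s_2$, $s_3(k,1)$, $s_{k/2}$ --- is a legitimate alternative to the paper's argument, and your local coset fact is correct. But the proposal has a genuine gap exactly at its central step: the assertion that any preserved system $\mathcal B$ must be a coarsening of the column system is never proved; you say ``I would conclude'' and then explicitly defer it (``Verifying that no skew or `diagonal' variant survives both the $a$- and $b$-constraints is part of the work here''). This deferred verification is not routine bookkeeping: your local fact places \emph{no constraint whatsoever} on a system whose blocks meet each row $c_i$ in at most one point, since singleton intersections are cosets of the trivial subgroup. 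All ``transversal'' systems --- of which diagonals are only the most visible example --- therefore survive everything you actually establish, and eliminating them requires precisely the kind of delicate matching of $a$'s rows against $b$'s shifted rows, with wrap-around at both ends and separate treatment of the six families (i)--(vi), that constitutes the real content of the claim. In difficulty this missing step is comparable to the paper's entire proof, which proceeds quite differently: it never classifies block systems, but shows (using transitivity, plus the observation that every $k$-cycle of $a$ moves either a fixed point of $b$ or some point in the same way as $b$ does) that no nontrivial cycle of $a$ or $b$ can have support inside a single block, deduces that the fixed-point set of $a$ is a union of blocks --- which forces $j=0$ or the pair (iv) with $j=2$ --- and then derives a contradiction from the structure of the cycle $a^{-1}b$ via the dichotomy ``$a^{-1}b$ fixes a block if and only if it fixes it pointwise''.

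A secondary, fixable error: your claim that for $j\notin\{0,1\}$ the fixed points of $a$ alone kill every column system fails for pair (iv) with $j=2$. There $a=a_0\cdot s_2(\floor{n/k}k+1,1)$ has \emph{no} fixed points (the two leftover points form the transposition $(n-1,n)$), and one checks that this $a$ does preserve the parity system modulo $2$: $a_0$ shifts every point of a row by one and $k$ is even, so $a$ interchanges the two parity classes. Only the factor $s_3(k,1)$ in $b$ breaks that system (it sends $k\mapsto k+2$, two points of the same class), so the elimination must there be run on $b$, as in your $j\in\{0,1\}$ cases. This shows the case organisation by the value of $j$ alone does not line up with which element carries the obstruction; together with the missing classification step, the proposal as written is not yet a proof.
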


\begin{proof} Since in every case $\gen{a,b}$ is transitive, there remains to show that $\gen{a,b}$ does not preserve any system of blocks. Consider $t$ a nontrivial divisor of $n$, and consider a system of blocks in which each block has cardinality $t$. Assume this system is preserved by some $\gen{a,b}$; we want to get to a contradiction. Throughout the proof, for every $i \in \{1, \ldots, n \}$ we indicate with $B_i$ the block containing $i$. We also write the action of $S_n$ on $ \{1, \ldots, n \}$ on the right. Recall that if a permutation $g$ preserves a system of blocks, if $c$ is a cycle in the cycle decomposition of $g$, and if $\Lambda_1, \ldots, \Lambda_r$ are the blocks that have nonempty intersection with $\text{supp}(c)$, then $|\Lambda_i \cap \text{supp}(c)| = |\Lambda_j \cap \text{supp}(c)|$ for every $i,j$, and $|\text{supp}(c)|=r \cdot |\Lambda_1 \cap \text{supp}(c)|$. In the following, when we mention cases (i) - (vi) we refer to those after Claim \ref{clai1}.

Note that the transitivity of $\gen{a,b}$ implies that there does not exist a block fixed by both $a$ and $b$. It is easy to see that, for every $(n,k,\mathfrak s)$, every $k$-cycle of $a$ either moves a fixed point of $b$ or moves some point in the same way as $b$ does.\footnote{With a unique exception, namely, when $k=3$ and we are in case (ii). Then $(4,5,6)$ sends $4$ to $5$, as $b^{-1}$ --- and not $b$ --- does. For our argument there is no difference.} The same sentence holds true if we swap $a$ and $b$. Therefore, the support of a $k$-cycle of $a$ or $b$ intersects at least 2 blocks. The same holds for the transpositions $(\floor{n/k}k+1, \floor{n/k}k+2)$ and $(k-2,k-1)$ that we used to change parity to $a$ and $b$. Now $a$ and $b$ might have also $(k/2)$-cycles (cases (v), (vi)). If $k \geqslant 6$, the same consideration as above holds. If $k=4$, however, direct check shows that the argument does not apply (when $j=0$) to the transposition $(n-1,n)$ of $a$ and to the transposition $(n,1)$ of $b$; and (when $j=1$) does not apply to the transposition $(n-2, n-1)$ of $a$. Nevertheless we can quickly see that not even such transpositions can fix a block. For the last case ($j=1$), assume $B_{n-2}=B_{n-1}$. Then $n-5=(n-2)b$ and $n=(n-1)b$ lie in $B_{n-2}b$; but $a$ fixes $n$ and $n-5$ belongs to a $4$-cycle of $a$, which contradicts what we said above. For the case $j=0$, if $B_{n-1}=B_{n}$ then $B_{n-1}b^2=B_{n-1}$ since $n$ belongs to a transposition of $b$. Then $n-2=(n-1)b^3 \in B_{n-1}b^3=B_{n-1}b$; but also $n-2=(n-1)b^2a \in B_{n-1}b^2a=B_{n-1}$, contradiction. Similarly we can see that $B_n=B_1$ cannot occur. Therefore, we have that, for every $(n,k,\mathfrak s)$, \textit{the support of an $m$-cycle of $a$ or $b$ is contained in one block if and only $m=1$}. \qquad\hspace*{\fill}(1)

In particular, some blocks are made of the points fixed by $a$, and this implies $j \neq 1$, as otherwise a block would consist of 1 point. 

Assume now $a$ has fixed points. Since $j \neq 1$, the last cycles of $a$ and $b$ have not been splitted (that is, we are not in cases (v), (vi)). The set of points fixed by $a$ is some $\{n-t, \ldots, n \}$,  with $\floor{n/k}k+1 \leqslant n-t \leqslant n-1$, and some blocks are made of these points. However, it is easy to check that $b$ takes only the point $n$ outside of this set, hence it cannot preserve these blocks. Therefore, $a$ has no fixed points, and either $j=0$ or $j=2$ and the original $a$ has been multiplied by $(n-1,n)$.

In the latter case (case (iv)), by (1) $B_{n-1} \neq B_{n}$, and a suitable $k$-cycle $c$ of $a$ collaborates to exchange these blocks. Recall now that $a^{-1}b$ is a cycle. Its support contains $k$ and $k+2$, and $kb=k+2$, hence by transitivity of $\gen{a,b}$ the support of $a^{-1}b$ must intersect more than 1 block. In particular, \textit{$a^{-1}b$ fixes 1 block if and only if it fixes it pointwise} (the same argument works also for the case $j=0$).\qquad\hspace*{\fill}(2)

Note that $n$ is fixed by $a^{-1}b$ but $n-1$ is not; hence $c$ is made of $k/2$ points fixed by $a^{-1}b$ and $k/2$ points not fixed by $a^{-1}b$, in such a way that a point fixed by $a^{-1}b$ is followed by a point not fixed by $a^{-1}b$ and viceversa. However, it is easy to see that $a$ does not have a $k$-cycle with this property. Therefore, $j=0$.

In this case, the points not fixed by $a^{-1}b$ are all the $i \equiv 0,1$\: mod $k$, $k+2$ and possibly (case (v)) $n-k/2+1$ and $k/2$. Note that $(k+1)b=k$, and $kb=k+2$. By (1), $B_{k+1} \neq B_k$. If $k+2 \in B_{k+1}$, then $k \neq 3$ as $(4,3,5)$ should intersect the blocks in the same number of points (as noticed at the beginning of the proof). Hence $k+3=(k+2)b \in B_{k+2}b=B_k$, and since $k$ is not fixed by $a^{-1}b$, by (2) we obtain that $k+3$ is not even fixed. Looking at the list of (non)fixed points above, the unique possibility is $n-k/2+1=k+3$: looking mod $k/2$ we get $k=4$ and $n=8$, case that we can check singularly.

Therefore, $B_{k+1}, B_k, B_{k+2}$ are pairwise distinct. Let $k+1 \neq i \in B_{k+1}$. Then by (2) we should have that $i, ib, ib^2$ are pairwise distinct, different from $k+1, k, k+2$ and not fixed by $a^{-1}b$. However, it is easy to check that an $i$ with this property does not exist.

\end{proof}

\subsection{The case $k > n/2$}

\begin{comment}
\begin{thm}
\label{supp}
Let $G$ be a $2$-transitive permutation group of degree $n$ containing a nontrivial element $g$, with $|\text{supp}(g)| < n/4-1$. Then, $G$ contains $A_n$.
\end{thm}
\begin{thm}
\label{jor}
Let $G$ be a primitive permutation group of degree $n$ containing a $q$-cycle, where $q \leqslant n-3$ is prime. Then, $G$ contains $A_n$.
\end{thm}

Ci metto anche l'altro per il momento...

\begin{thm}
\label{asy}
Let $G$ be a $t$-transitive permutation group of degree $n$, with $t > 3 \log n$. Then, $G$ contains $A_n$.
\end{thm}

\begin{thm}
\label{liv}
Let $G$ be a primitive permutation group of degree $n$. If $G$ contains a cycle fixing $m$ points, then $G$ is $(m+1)$-transitive.
\end{thm}

\end{comment}

Here we will apply points (i) and (ii) of Theorem \ref{old}. Moreover, we further divide in two cases. More precisely, for the moment we exclude the case of alternating groups and $k \in \{n-1,n\}$ even. In other words, in the language of Theorem \ref{mainprop} we assume that the following condition, call it (A), holds: $k > n/2$, and whenever $k$ is even and $\mathfrak s=+1$, then $k \leqslant n-2$.

\begin{prof}
The case in which it is possible to choose $k$-cycles (namely, $k$ odd or $k$ even and $\mathfrak s=-1$) were dealt already in \cite M. Anyway, for completeness we exhibit generating pairs here. If $k \leqslant n-1$ we can take $a=(1,2, \ldots, k)$ and $b=(1,2,\ldots, 2k-n, k+1, k+2, \ldots, n)$. In the case $(n,k)=(6,4)$ actually we should change something and take for instance $b=(1,3,5,6)$. If $k=n$, we can take the same $a$ and $b=a(1,2,3)$. We omit the details of the verification since they are straightforward (and very similar to those that we present below).

Now we deal with the case in which it is not possible to choose $k$-cycles, namely, $k$ even and $\mathfrak s=+1$. Consider $a=(1,2, \ldots, k)(k+1, k+2)$ and $b=(1, 2, \ldots, 2k-n, k+1, k+2, \ldots, n)(2k-n+1, 2k-n+2)$. It is easy to check that $\gen{a,b}$ is transitive; now we show that $\gen{a,b}$ is primitive as well.

Assume $\gen{a,b}$ preserves some system of blocks. The transposition of $a$ exchanges 2 blocks otherwise, since $(k+1)b=k+2$, a block would be fixed by both $a$ and $b$. Since the $k$ cycle of $a$ sends $2k-n+1$ to $2k-n+2$, also the transposition of $b$ exchanges these 2 blocks, and since $\gen{a,b}$ is transitive there cannot be other blocks. This implies that $a$ and $b$ do not have fixed points, namely, $k=n-2$. Theorefore, in the case $k \leqslant n-3$ $\gen{a,b}$ is primitive. In the case $k=n-2$, instead, $\gen{a,b}$ preserves the blocks $\{ 1 \leqslant i \leqslant n,\: \text{$i$ even}\}$ and $\{ 1 \leqslant i \leqslant n,\: \text{$i$ odd}\}$. If $n \geqslant 8$, we can multiply $b$ on the right by $(1, 2, 3)$: it is easy to see that, with this modification, $\gen{a,b}$ cannot preserve any system of blocks. For $n=6$, we choose $b=(1,3,5,6)(2,4)$: it works.

Both in the cases $k \leqslant n-3$ and $k=n-2$, we can check that $a^{-1}b$ is a cycle that fixes for instance the point $k+2$. Hence, by Theorem \ref{livin} $\gen{a,b}$ is $2$-transitive. Now direct computation shows that $a^{-1}bab^{-1}=b^ab^{-1}$ moves at most $8$ points. Therefore, by Theorem \ref{old} (i) we are done provided $8 < n/3-2\sqrt n/3$. The cases $n \leqslant 36$ can be checked singularly, so that the proof is complete.

\qed

\end{prof}

Finally, we deal with the remaining case, namely, $k \in \{n-1,n\}$ and we cannot choose $k$-cycles. In other words, the following condition (B) holds: $k \in \{n-1,n\}$ is even, is not a prime power and $\mathfrak s=+1$.

\begin{profo}
Write $k=2^mq$, where $m \geqslant 1$ and $q \geqslant 3$ is odd. Consider the following cycle-shape: $z=(q,2^m, \ldots, 2^m, 2 \ldots, 2)$, where there are $(2^m)$-cycles in number $t \geqslant 1$ and (if $m \geqslant 2$) $2$-cycles in number $0 \leqslant l \leqslant 2^{m-1}-1$, in such a way that $n-1 \leqslant q + 2^mt + 2l \leqslant n$. In other words, we start with a $q$-cycle, then we put as many $2^m$-cycles as we can, and finally we put as many $2$-cycles as we can.

Observe that if $q + 2^{m+1} \leqslant k=q2^m$, that is, $2^{m+1}/({2^{m}-1}) \leqslant q$, then $t \geqslant 2$. Now $2^{m+1}/({2^{m}-1})$ is decreasing for $m \geqslant 2$, and $2^3/(2^2-1) < 3 \leqslant q$, hence $t \geqslant 2$ in this case. If $m=1$, $t \geqslant 2$ unless $q=3$, that is, $k=6$. For the moment we exclude this case, so that we have $t \geqslant 2$.

Now we consider $z$ as a partition of $n$, adding $1$ at the end if necessary. If it defines an odd permutation, we split the first $(2^m)$-cycle into $(2^{m-1})$-cycles (so that we get $1$-cycles in the case $m=1$). Now we put the numbers $\{1, \dots, n\}$ into the cycles, from left to right, obtaining an element of $S_n$ that we call $a$: $a=(1, 2, \ldots, q)(q+1, q+2, \ldots, q+2^m)(\dots)$ (of course, if the first $(2^m)$-cycle has been splitted then $a$ is different). Since $a$ has still at least a $(2^m)$-cycle, it has order $2^mq=k$. Moreover, by construction $a$ lies in $A_n$. We then define $b$ as the element of $A_n$ which has the same cycle-shape of $a$, and which is obtained translating $a$ on the right of $q-2$ positions, that is, $b=(q-1, q, \ldots 2q-2)(2q-1, 2q, \ldots ,2q+2^m-2)(\dots)$, where of course the translation is seen mod $n$.

\begin{comment}

Assume first $m \geqslant 2$. Note that, for every $t_i, h_i, k_i$ natural numbers, $i=1,2$, we have that $q+2^{m-1}t_1+2^mh_1+2k_1 \neq q-2+q+2^{m-1}t_2+2^mh_2+2k_2$ (they are non-equivalent mod $2$). This implies that there do not exist a point $1 \leqslant i \leqslant n$ and cycles $c_1$ and $c_2$ of $a$ and $b$ (respectively), such that $c_1$ and $c_2$ both terminate at $i$. Starting from the first cycle of $a$, this allows to pass back and forth cycles of $a$ and cycles of $b$, until reaching the last cycle of $b$. Since such cycle contains the point $n$, we have that $\gen{a,b}$ is transitive. In the case $m=1$, the same consideration holds provided that the equation above is valid. This is the case unless $t_1=1$ --- a cycle of $b$ may terminate at the point $q+1$. Since we already assumed $q \geqslant 5$, the $q$-cycle of $b$ reaches at least $q+3$, so that also in this case $\gen{a,b}$ is transitive. 
\end{comment}

It is easy to see that $\gen{a,b}$ is transitive. The key point is that $q$ and $2q-2$ are nonequivalent mod $2$, and nothing changes if we add cycles of even length; this allows to pass back and forth cycles of $a$ and cycles of $b$, until reaching the last cycle of $b$. Since such cycle contains the point $n$, we have that $\gen{a,b}$ is indeed transitive.

Assume now that $\gen{a,b}$ preserves some system of blocks, each block having cardinality $t$. Call $c$ the $q$-cycle of $a$, and call $d$ the $q$-cycle of $b$. Note that $c$ cannot fix a block, for $(q-1)b=q$ would imply that $b$ also fixes the block. Therefore, the support of $c$ intersects $r > 1$ blocks, with $r$ divisor of $q$. Since $r$ is odd, it does not divide the length of any other cycle of $a$, hence $c$ permutes all the points of these $r$ blocks. Clearly, the same consideration holds for $d$. Now, employing the notation used in the proof of Claim \ref{clpri}, we have $B_{q-1} \neq B_q$. Hence $B_{q-1}$ and $B_q$ should be contained in the support of both $c$ and $d$. However, the intersection of such supports is $\{q-1,q\}$, and we obtain a contradiction. Therefore, $\gen{a,b}$ is primitive.

There remains to conclude that $\gen{a,b}=A_n$. A suitable power of $a$ is a $q$-cycle, and $q \leqslant n/2$; hence by Theorem \ref{livin} $G$ is $6$-transitive provided $n/2 \geqslant 5$, which is true since we assumed $(n,k) \neq (7,6)$. Now $q \leqslant n/2 < 3n/5$, hence we conclude by Theorem \ref{old} (ii).

In the case $(n,k)=(7,6)$, it is easy to verify that $a=(1,2,3)(4,5)(6,7)$ and $b=(2,3,4)(5,6)(7,1)$ generate $A_7$ (these are exactly the kind of elements we have  used in this proof).

\qed

\end{profo}

\Addresses


\begin{thebibliography}{1}

%\bibitem{AM} S. Annin, J. Maglione,  Economical generating sets for the symmetric and alternating groups consisting of cycles of a fixed length, Journal of Algebra and Its Applications, \textbf{11}(06):1250110 (2012).

\bibitem{Bo} A. Bochert, Über die Classe der Transitiven Substitutionengruppen, Math. Ann. \textbf{49} (1897), 131-144.

\bibitem{J} G.A. Jones, Primitive permutation groups containing a cycle, Bulletin of the Australian Mathematical Society \textbf{89}(01) (2014), 159-165.

\bibitem{Lan} J. Lanier, Generating mapping class groups with elements of fixed finite order, available at https://arxiv.org/abs/1710.04680.

\bibitem{LT} R. Levingston, D. E. Taylor, The theorem of Marggraff on primitive permutation groups which contain a cycle, Bulletin of the Australian Mathematical Society \textbf{15}(1) (1976), 125-128.

\bibitem{Ma} W.A. Manning, The degree and class of multiply transitive groups, III. Transactions of the American Mathematical Society, \textbf{35}(3) (1933), 585-599.


\bibitem{Mar} B. Marggraff, Über primitive Gruppen mit transitiven Untergruppen geringeren Grades, Buchdruckerei O. Lange, 1889.

\bibitem{M2} G.A. Miller, On the groups generated by two operators. Bulletin of the American Mathematical Society \textbf{7}(10) (1901), 424-426.

\bibitem{M} G.A. Miller, Possible orders of two generators of the alternating and of the symmetric groups, Transactions of the American Mathematical Society \textbf{30}(1) (1928), 24-32.


\bibitem{Nu} Ya N. Nuzhin, Generating triples of involutions of alternating groups,  Mathematical Notes \textbf{51}(4) (1992),  389-392.

\bibitem{Ona1} M. O'Nan, Normal structure of the one-point stabilizer of a doubly-transitive permutation group I, Transactions of the American Mathematical Society \textbf{214} (1975), 1-42.

\bibitem{Ona2} M. O'Nan, Normal structure of the one-point stabilizer of a doubly-transitive permutation group II, Transactions of the American Mathematical Society \textbf{214} (1975), 43-74.

\bibitem{Wi} H. Wielandt, Abschätzungen für den Grad einer Permutationsgruppe von vorgeschriebenem Transitivitäsgrad, Schriften des Math. Sem. und des lnst. für angew. Math. der Univ. Berlin \textbf{2} (1934), 151-174.

\bibitem{Wi2} H. Wielandt, Finite Permutation Groups, Academic Press, New York, 1964.
 

\end{thebibliography}
\end{document}